\newtheorem{Theorem}{Theorem} 
\newtheorem{Definition}[Theorem]{Definition}
\newtheorem{Proposition}[Theorem]{Proposition}
\newtheorem{Lemma}[Theorem]{Lemma}
\newtheorem{Remark}[Theorem]{Remark}
\newtheorem{Hypothesis}[Theorem]{Hypothesis}
\def\R{\mathbb R}
\def\N{\mathbb N}
\def\E{\mathbb E}
\def\P{\mathbb P}
\def\ds{\displaystyle}
\title{Existence of the Fomin derivative of the invariant measure  of a stochastic reaction--diffusion equation}
\author{Giuseppe Da Prato \thanks{Giuseppe Da Prato, Scuola Normale Superiore, 56126, Pisa, 
Italy.   e-mail:   giuseppe. daprato@sns.it }  and Arnaud Debussche \thanks{
Arnaud Debussche, IRMAR and \'Ecole Normale Sup\'erieure de Rennes, Campus de Ker Lann, 37170 Bruz, France. e-mail:arnaud.debussche@ens-rennes.fr}}
\begin{document}

\maketitle

\begin{abstract}
We consider a reaction--diffusion equation  perturbed by noise (not necessarily white). We prove existence of the Fomin derivative of the corresponding transition semigroup $P_t$.  The main tool is a   new  estimate for $P_tD\varphi$ in terms of  $\|\varphi\|_{L^2(H,\nu)}$, where $\nu$ is the invariant measure of $P_t$.

\end{abstract}

\noindent {\bf 2010 Mathematics Subject Classification AMS}: 60H15, 35R15, 35K57\medskip

\noindent {\bf Key words}: Stochastic reaction diffusion equations, invariant measures, Fomin differentiability.

 \section{Introduction}

Let $H=L^2(\mathcal O)$ where $\mathcal O=[0,1]^n$, $n\in \N$  \footnote{This choice is made for simplicity, all result below hold for a bounded domain of $\mathcal O$ with sufficiently regular boundary (Lipschitz for instance).}, and denote by  $\partial \mathcal O$ the
boundary of $\mathcal O$.
  We are concerned with the following stochastic differential  equation
\begin{equation}
\label{e4.1}
\left\{\begin{array}{lll}
dX(t)=[AX(t)+p(X(t))]dt+BdW(t),\\
\\
X(0)=x.
\end{array}\right.
\end{equation}
where  $A$ is  the realization of the Laplace operator  $\Delta_\xi $ equipped with
Dirichlet boundary conditions,
$$
Ax=\Delta_\xi x,\quad x\in D(A),\quad D(A)=H^2(\mathcal O)\cap H^1_0(\mathcal O),
$$
  $p$ is a decreasing polynomial   
of odd degree equal to $N>1$,  $B\in L(H)$ and  $W$  is an $H$--valued cylindrical Wiener process  on a filtered
probability space
$(\Omega,\mathcal F, (\mathcal F_t)_{t>0},\P)$. 
 
   It is well known that this equation has unique strong solutions and that the associate transition semigroup possesses a unique invariant measure.

 The aim of this article is to derive new properties on this invariant measure. If $B$ is the identity, 
 then the system is gradient and the invariant measure is explicit but this is not the case in general. If $B$ commutes with $A$ and has a bounded inverse, it follows from \cite{BogDaPRock} that the invariant 
 measure has a density which is in a Sobolev space based on the reference gaussian measure associated to the linear equation. It has also been shown in \cite{DaDe-04} that under our assumptions, the invariant measure is absolutely continuous with respect to the reference gaussian measure. Otherwise, not much is
 known on this invariant measure.

 For as   the  operator $B$ is concerned, we shall assume: 
\begin{Hypothesis}
\label{h1}
  $B=(-A)^{-\gamma/2}$  where   $\frac{n}2-1<\gamma<1$.  
Obviously this implies that $n<4$.
 
 \end{Hypothesis}

 \begin{Remark}
 \em The assumption $\frac{n}2-1<\gamma$ implies that the stochastic convolution
$$
W_A(t):=\int_0^t(-A)^{-\gamma/2} e^{(t-s)A}dW(s),\quad t\ge 0,
$$
 is a well defined continuous process see e.g. \cite{DaZa14},  whereas under the condition $\gamma<1$    the Bismut--Elworthy-Li formula \eqref{e4.10} below holds and implies strong Feller 
 property on $H$, see \cite {Ce01}.
 If $\gamma\ge 1$, we need to work with different topologies.
 
 If $\gamma <\frac{n}2-1$, equation \eqref{e4.1} is not expected to have solutions with positive 
 spatial regularity and the equation has to be renormalized. This has been studied in \cite{DaDe-03} for $n=2$ and more recently in \cite{Hai14} and \cite{CaCh14} for $n=3$. 
 
 All following results remain true taking $B=G(-A)^{-\gamma/2}$ with $G\in L(H)$ and $\frac{n}2-1<\gamma<1$. We take this form for $B$ for simplicity.
 
 Also, the assumption that $p$ is decreasing is not necessary and could be replaced by: $p'$ is bounded above.
 \end{Remark}

Before explaining the content of the paper, it is convenient to recall some   results about    problem \eqref{e4.1}, that we gather from \cite{Da04}. We notice, however, that Reaction--Diffusion equations have been recently the object of several researches,  see \cite{DaZa14} and references therein.

We start with the definition of   solution of \eqref{e4.1}.
\begin{Definition}
\label{d4.5}
(i). Let $x\in L^{2N}(\mathcal O);$ we say that $X\in C_W([0,T];H)$
$\footnote{By $C_W([0,T];H)$ we  mean the set of  $H$--valued stochastic processes continuous in mean square and adapted to the filtration $(\mathcal F_t)$ .}$ is a
 {\em mild} solution
of problem \eqref{e4.1} if $X(t)\in L^{2N}(\mathcal O)$ for all $t\ge 0$ and  fulfills the following integral equation
\begin{equation}
\label{e4.8}
X(t)=e^{tA}x+\int_0^te^{(t-s)A}p(X(s))ds+W_A(t),\quad t\ge 0.
\end{equation}

\noindent (ii). Let $x\in H;$ we say that $X\in C_W([0,T];H)$ is a {\em generalized} solution
of problem \eqref{e4.1} if there exists a sequence $(x_n)\subset L^{2N}(\mathcal O),$ such that
$$
\lim_{n\to \infty}x_n=x\quad\mbox{\rm in}\;L^{2}(\mathcal O),
$$
and
$$
\lim_{n\to \infty}X(\cdot, x_n)=X(\cdot,x)\quad\mbox{\rm in}\;C_W([0,T];H).
$$

\end{Definition}

It is convenient to introduce  the following approximating problem
\begin{equation}
\label{e4.9}
\left\{\begin{array}{l}
dX_\alpha (t)=(AX_\alpha (t)+p_\alpha (X_\alpha (t))dt+(-A)^{-\gamma/2}dW(t),\\
\\
X_\alpha (0)=x\in H,
\end{array}\right.
\end{equation}
where for any $\alpha >0,$   $p_\alpha$  are the Yosida approximations of $p$, that is
$$
p_{\alpha }(r)= \frac{1}{\alpha }
\;(r-J_{\alpha}(r)),\;J_{\alpha }(r)=(1-\alpha p(\cdot))^{-1}(r),\quad r\in \R.
$$
Notice that, since $p_{\alpha }$ is Lipschitz continuous, then 
 for any $\alpha >0,$ and any $x\in H,$
  problem $(\ref{e4.9})$ has a unique 
 solution $X_\alpha (\cdot,x)\in C_W([0,T];H)$.

The following result is proved in \cite[Theorem 4.8]{Da04}
\begin{Proposition}
\label{p4.8} 
Assume that Hypothesis $\ref{h1}$ holds and let $T>0$. Then 
\begin{enumerate}

\item[(i)] If $x\in L^{2N}(\mathcal O),$  problem
  $(\ref{e4.1})$ has a unique  mild solution $X(\cdot,x)$.    
  
  \item[(ii)] If $x\in L^{2}(\mathcal O),$  problem
  $(\ref{e4.1})$ has a unique  generalized solution $X(\cdot,x).$

  In both cases $\ds{\lim_{\alpha \to 0}X_\alpha (\cdot,x)=X(\cdot,x)}$ in 
  $C_W([0,T];H).$

\end{enumerate}

\end{Proposition}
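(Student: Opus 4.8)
The plan is to reduce the stochastic problem \eqref{e4.1} to a pathwise deterministic one and then pass to the limit in the Yosida approximation \eqref{e4.9}. Since existence and uniqueness for \eqref{e4.9} is already granted, the substance of the argument is the convergence $X_\alpha\to X$ and the identification of the limit as a mild (resp.\ generalized) solution. Set $Y_\alpha(t)=X_\alpha(t)-W_A(t)$. Since $W_A$ is the mild solution of the linear equation $dZ=AZ\,dt+(-A)^{-\gamma/2}dW$, the process $Y_\alpha$ solves, for each fixed $\omega$, the random PDE
\[
Y_\alpha'(t)=AY_\alpha(t)+p_\alpha\bigl(Y_\alpha(t)+W_A(t)\bigr),\qquad Y_\alpha(0)=x,
\]
which contains no stochastic integral; the noise enters only through the continuous path $W_A(\cdot,\omega)$. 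By Hypothesis \ref{h1} the condition $\gamma>\frac n2-1$ guarantees (via Sobolev embedding) that $W_A$ has paths in $C([0,T];L^{2N}(\mathcal O))$, which is precisely the integrability needed to give meaning to the polynomial term $p_\alpha(Y_\alpha+W_A)$ in $H$.

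First I would establish a priori estimates, uniform in $\alpha$, for $Y_\alpha$. Testing the equation with $Y_\alpha$ in $H$ and using that $p_\alpha$ is decreasing (hence dissipative) and that $\langle AY_\alpha,Y_\alpha\rangle\le 0$ controls $\|Y_\alpha(t)\|_H$; the coercivity coming from the leading term of $p$ (odd degree, negative leading coefficient) then yields a uniform bound on $Y_\alpha$ in $L^{2N}(\mathcal O)$, obtained by testing with $|Y_\alpha|^{2N-2}Y_\alpha$. The delicate point here is the shift by $W_A$: expanding $p_\alpha(Y_\alpha+W_A)$ produces cross terms that are products of powers of $Y_\alpha$ with powers of $W_A$, and these must be absorbed into the good coercive term by Young's inequality, leaving a remainder controlled by high $L^q$ norms of $W_A$ --- finite precisely because of the lower bound on $\gamma$. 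I expect this uniform $L^{2N}$ estimate to be the main obstacle, since it is where the polynomial growth of the nonlinearity and the low spatial regularity of the stochastic convolution interact.

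Next I would prove convergence by a monotonicity (Yosida) argument. Writing $u=Y_\alpha+W_A$ and $v=Y_\beta+W_A$, so that $u-v=Y_\alpha-Y_\beta$, I would test the difference equation with $Y_\alpha-Y_\beta$ and use the standard decomposition of $\langle p_\alpha(u)-p_\beta(v),u-v\rangle$ into a nonpositive term (monotonicity of $p$ evaluated on the resolvents, since $p_\alpha=p\circ J_\alpha$ and $p$ is decreasing) plus a remainder of order $\alpha+\beta$, controlled by the uniform bounds through the relation $u-J_\alpha(u)=O(\alpha)$. Gronwall's lemma then gives that $(Y_\alpha)$ is Cauchy in $C([0,T];H)$ pathwise, hence $X_\alpha=Y_\alpha+W_A$ converges to some $X$. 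Passing to the limit in the mild formulation \eqref{e4.8} requires $p_\alpha(X_\alpha)\to p(X)$ after convolution with $e^{(t-s)A}$; this follows from the strong convergence of $X_\alpha$ together with the uniform $L^{2N}$ bound, which controls the polynomial term and also shows $X(t)\in L^{2N}(\mathcal O)$, so that $X$ is a genuine mild solution. The uniform moment bounds finally upgrade the pathwise convergence to convergence in $C_W([0,T];H)$ by dominated convergence.

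It remains to address uniqueness and part (ii). Uniqueness of the mild solution follows from the dissipativity of $A+p$: for two solutions the difference of the subtracted processes satisfies an energy inequality with a nonpositive nonlinear contribution (as $p$ is decreasing), so Gronwall forces them to coincide. The same dissipativity yields the contraction estimate $\|X(t,x)-X(t,y)\|_H\le\|x-y\|_H$ for $x,y\in L^{2N}(\mathcal O)$. For part (ii), given $x\in L^2(\mathcal O)$ I would choose $x_n\in L^{2N}(\mathcal O)$ with $x_n\to x$ in $L^2$; the contraction estimate shows $(X(\cdot,x_n))$ is Cauchy in $C_W([0,T];H)$, and its limit is, by Definition \ref{d4.5}, the generalized solution, which is unique and independent of the approximating sequence by the same estimate. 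The convergence $X_\alpha(\cdot,x)\to X(\cdot,x)$ in this case follows by combining the convergence already obtained for the regularized initial data with the contraction estimate.
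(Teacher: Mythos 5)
Your proposal is correct and takes essentially the same route as the proof the paper relies on: the paper gives no proof of Proposition \ref{p4.8} but cites \cite[Theorem 4.8]{Da04}, whose argument is precisely this pathwise subtraction of the stochastic convolution, dissipativity of $A$ and $p_\alpha$ in $H$ and in $L^{2N}(\mathcal O)$, a Yosida/monotonicity argument for the convergence of $X_\alpha$, and the contraction estimate to pass from $L^{2N}$ to $L^2$ initial data. The only slip is cosmetic: since $p_\alpha=p\circ J_\alpha$ is not a polynomial, the uniform $L^{2N}$ estimate should be obtained from the monotonicity splitting $\langle p_\alpha(Y_\alpha+W_A)-p_\alpha(W_A),|Y_\alpha|^{2N-2}Y_\alpha\rangle\le 0$ together with $|p_\alpha|\le|p|$ applied to $p_\alpha(W_A)$, rather than by ``expanding cross terms'' with Young's inequality.
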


We introduce now the transition semigroup $P_t$
$$
P_t\varphi(x)=\E[\varphi(X(t,x))],\quad \varphi\in B_b(H)
$$ 
and the approximate transition semigroup $P^\alpha_t$
$$
P_t^\alpha\varphi(x)=\E[\varphi(X_\alpha(t,x))],\quad \varphi\in B_b(H).
$$
By $B_b(H)$ we mean the space of all $H$--valued real mappings that are Borel and bounded.

 For  $P^\alpha_t$
 the following  Bismut-Elworthy-Li formula holds, see \cite{Ce01}.
 \begin{equation}
\label{e4.10}
 \langle DP^\alpha _t\varphi(x),h  \rangle=\frac1t\;\E\left[\varphi(X_\alpha (t,x))\int_0^t\langle 
(-A)^{\frac{\gamma}{2}}\eta_\alpha^h(s,x),dW(s) \rangle  \right],\quad h\in H, 
\end{equation}
 where for any $h\in H$,  $\eta_\alpha^h(t,x)=:D_xX_\alpha(t,x)\cdot h$  is the differential of $X_\alpha(t,x)$ with respect to $x$ in the direction $h$ and is the  solution of the equation
 \begin{equation}
\label{e5b}
 \frac{d}{dt}\;\eta_\alpha ^{h}(t,x)=A\eta_\alpha ^{h}(t,x)-p'_\alpha(X_\alpha(t,x))
 \eta_\alpha^{h}(t,x),\quad \eta_\alpha^{h}(0,x)=h.
\end{equation}

 The following result is proved in \cite[Theorem
 4.16]{Da04}
 \begin{Proposition}
\label{p4.16} 
Assume that Hypothesis $\ref{h1}$ holds. Then  the semigroup $P_t$ has a unique invariant
measure $\nu .$ Moreover  there exists $c_N>0$
such that
 \begin{equation}
\label{e4.11}
\int_H|x|_{L^{2N}(\mathcal O)}^{2N}\nu(dx)\le c_N.
\end{equation}
\end{Proposition}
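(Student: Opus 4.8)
The plan is to establish the three assertions—existence, uniqueness, and the moment bound \eqref{e4.11}—by working first with the regularised dynamics $X_\alpha$ of \eqref{e4.9}, for which $p_\alpha$ is Lipschitz, and then passing to the limit $\alpha\to 0$ via Proposition \ref{p4.8}. The heart of the matter is a dissipative a priori estimate in $L^{2N}(\mathcal O)$ that is uniform in $\alpha$ and in $t$; everything else is then a fairly standard combination of Krylov--Bogoliubov (for existence) and Doob--Khasminskii (for uniqueness).

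For the moment bound I would apply It\^o's formula to $|X_\alpha(t,x)|_{L^{2N}}^{2N}$. Integrating by parts, the Laplacian contributes $-2N(2N-1)\int_{\mathcal O}|X_\alpha|^{2N-2}|\nabla X_\alpha|^2\,d\xi\le 0$, so it can be discarded. Since $p$ is decreasing of odd degree $N$ its leading coefficient is negative, giving the coercivity $r\,p_\alpha(r)\le -c_1|r|^{N+1}+c_2$ uniformly in $\alpha$, whence $\langle p_\alpha(u),|u|^{2N-2}u\rangle\le -c_1|u|_{L^{3N-1}}^{3N-1}+c_2$ after absorbing the lower-order contribution by Young's inequality (using $3N-1>2N>2N-2$). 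As $3N-1>2N$ for $N>1$ this term dominates and, after controlling the It\^o correction, yields a differential inequality of the form
\begin{equation*}
\frac{d}{dt}\,\E|X_\alpha(t,x)|_{L^{2N}}^{2N}\le -c\,\E|X_\alpha(t,x)|_{L^{2N}}^{2N}+C,
\end{equation*}
with $c,C>0$ independent of $\alpha,t,x$. Gronwall then gives $\sup_{t\ge 1}\E|X_\alpha(t,x)|_{L^{2N}}^{2N}\le C/c+1$, and \eqref{e4.11} follows by integrating against the invariant measures $\nu_\alpha$ of $P^\alpha_t$—where stationarity makes the left-hand side vanish—and letting $\alpha\to0$ with the help of Proposition \ref{p4.8} and Fatou's lemma.

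For existence I would invoke Krylov--Bogoliubov: the time averages $\tfrac1T\int_0^T P_t^*\delta_{x}\,dt$ are tight provided one also has a uniform-in-time bound in a space compactly embedded in $H$ (the $L^{2N}$ bound alone does not suffice, as $L^{2N}(\mathcal O)\hookrightarrow H$ is not compact). Such a bound, e.g.\ $\sup_{t\ge1}\E|X_\alpha(t,x)|_{D((-A)^{\beta/2})}^2\le C$ for some small $\beta>0$, so that $D((-A)^{\beta/2})\hookrightarrow H$ compactly by Rellich, is obtained by the same dissipative scheme in the fractional norm, and a limit point of the averages is then invariant. For uniqueness I would pass the Bismut--Elworthy--Li formula \eqref{e4.10} to the limit to obtain the gradient estimate $\|DP_t\varphi\|_\infty\le C t^{-1/2}\|\varphi\|_\infty$—here $\gamma<1$ is what keeps $(-A)^{\gamma/2}\eta_\alpha^h$ integrable—giving the strong Feller property of $P_t$; combined with irreducibility, which follows from the density of the range of $B=(-A)^{-\gamma/2}$ together with an approximate controllability argument, Doob's theorem yields a unique, ergodic invariant measure.

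The step I expect to be delicate is the control of the It\^o correction in the $L^{2N}$ estimate under the sharp assumption $\frac n2-1<\gamma$. The correction equals $N(2N-1)\,\E\int_{\mathcal O}|X_\alpha|^{2N-2}K\,d\xi$ with $K=\sum_k\lambda_k^{-\gamma}e_k^2$, and $\mathrm{Tr}\,(-A)^{-\gamma}=\sum_k\lambda_k^{-\gamma}$ converges only for $\gamma>n/2$, so $K$ need not be a bounded function. The remedy I would use is the splitting $X_\alpha=W_A+Y_\alpha$, where $Y_\alpha$ solves the pathwise equation $Y_\alpha'=AY_\alpha+p_\alpha(Y_\alpha+W_A)$: the condition $\gamma>\tfrac n2-1$ is exactly what makes $W_A$ a well-defined process with finite $L^p(\mathcal O)$ moments, by the Gaussian estimates recalled in \cite{DaZa14}, so the stochastic correction becomes a deterministic forcing that the coercive term $-c_1|Y_\alpha|_{L^{3N-1}}^{3N-1}$ absorbs through Young's inequality, closing the estimate uniformly in $\alpha$.
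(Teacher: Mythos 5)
Your overall strategy is the same as that of the proof the paper points to (it gives no argument of its own, quoting \cite[Theorem 4.16]{Da04}; its Appendix repeats the key ingredients for $\nu_\alpha$): split off the stochastic convolution $W_A$, whose $L^p(\mathcal O)$ Gaussian moments are exactly what $\gamma>\tfrac n2-1$ buys (estimate \eqref{e6.36}), prove a dissipative $L^{2N}$ estimate uniform in $\alpha$, get existence by Krylov--Bogoliubov with a fractional-regularity bound for tightness, and uniqueness by Bismut--Elworthy--Li (strong Feller, using $\gamma<1$) plus irreducibility and Doob. However, one step of your estimate fails as written. You claim the Yosida approximations satisfy $r\,p_\alpha(r)\le -c_1|r|^{N+1}+c_2$ uniformly in $\alpha$. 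This is false: $p_\alpha$ is globally Lipschitz for fixed $\alpha$, so $r\,p_\alpha(r)$ decays at most quadratically. Concretely, for $p(r)=-r^3$ one has $J_\alpha(r)\sim(r/\alpha)^{1/3}$ and $p_\alpha(r)=p(J_\alpha(r))\sim -r/\alpha$ as $r\to+\infty$, hence $r\,p_\alpha(r)\sim -r^2/\alpha$, which cannot lie below $-c_1r^{4}+c_2$. So the coercive term $-c_1|Y_\alpha|^{3N-1}_{L^{3N-1}}$ that you invoke twice---to absorb the It\^o correction and to absorb the forcing coming from $W_A$ in the split equation---is simply not available at the level of the approximations; it only reappears in the limit $\alpha\to0$. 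What the Yosida approximation does preserve, uniformly in $\alpha$, is monotonicity, $(p_\alpha(r)-p_\alpha(s))(r-s)\le0$, together with $|p_\alpha(r)|\le|p(r)|$ and $r\,p_\alpha(r)\le J_\alpha(r)p(J_\alpha(r))\le -c_1|J_\alpha(r)|^{N+1}+c_2$.

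The repair keeps your splitting but takes the exponential decay from the Laplacian rather than from $p$. For $Y_\alpha=X_\alpha-W_A$, test the pathwise equation with $|Y_\alpha|^{2N-2}Y_\alpha$: monotonicity gives
\begin{equation*}
\langle p_\alpha(Y_\alpha+W_A),|Y_\alpha|^{2N-2}Y_\alpha\rangle
\le \langle p_\alpha(W_A),|Y_\alpha|^{2N-2}Y_\alpha\rangle
\le |p(W_A)|_{L^{2N}(\mathcal O)}\,|Y_\alpha|^{2N-1}_{L^{2N}(\mathcal O)},
\end{equation*}
while, since $|Y_\alpha|^{N-1}Y_\alpha\in H^1_0(\mathcal O)$, Poincar\'e's inequality yields
\begin{equation*}
\langle AY_\alpha,|Y_\alpha|^{2N-2}Y_\alpha\rangle
=-\frac{2N-1}{N^2}\int_{\mathcal O}|\nabla(|Y_\alpha|^{N-1}Y_\alpha)|^2\,d\xi
\le -\frac{(2N-1)\lambda_1}{N^2}\,|Y_\alpha|^{2N}_{L^{2N}(\mathcal O)}.
\end{equation*}
Young's inequality and $\E|p(W_A(t))|^{2N}_{L^{2N}(\mathcal O)}\le C\bigl(1+\E|W_A(t)|^{2N^2}_{L^{2N^2}(\mathcal O)}\bigr)\le C$ (i.e.\ \eqref{e6.36} with $m=N^2$) then give precisely the differential inequality you wanted, with constants independent of $\alpha$, $t$, $x$, and the rest of your argument (stationarity of $\nu_\alpha$, passage to $\nu$ by weak convergence and lower semicontinuity, Krylov--Bogoliubov, strong Feller plus irreducibility) goes through unchanged. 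Note also that for $n\ge2$ the trace $\sum_k\lambda_k^{-\gamma}$ is divergent for every admissible $\gamma$, so the splitting is not merely a convenience for the It\^o correction, as your last paragraph correctly anticipates, but unavoidable.
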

 \noindent Similarly   the approximating problem \eqref{e4.9} has a unique  invariant
measure $\nu_\alpha$.  It is not difficult to show  that $\nu_\alpha$ weakly converges to $\nu$  and
 \begin{equation}
\label{e7c}
\int_H|x|_{L^{2N}(\mathcal O)}^{2N}\nu_\alpha(dx)\le c_N.
\end{equation}
However, since we couldn't find a quotation of this fact, we have added a proof in the Appendix below.

   As well known  $P_{t}$ can be uniquely extended to a
strongly  continuous  semigroup of contractions in  $L^{2}(H,\nu)$ (still denoted $P_t$).
We shall denote by $\mathcal L$ its 
 infinitesimal generator   and by  $\mathcal L_0$   the differential operator    
$$
\begin{array}{lll}
\mathcal L_0\varphi&=&\ds{\frac 12\;\mbox{\rm Tr}\;[(-A)^{-\gamma}D ^2\varphi]+\langle x, AD \varphi\rangle
+\langle p(x), D \varphi\rangle}\ ,\quad
\varphi\in  \mathcal E_A(H), 
\end{array}
$$
 where  $ \mathcal E_A(H)$  is 
  the linear span of    all real parts of  functions of the form 
   $$\varphi_h(x):=e^{i \langle h,x   \rangle},\;x\in H,
$$
where  $h\in D(A)$.  We have used the notation $D\varphi$ for the gradient of $\varphi$ in $H$.

Similarly, for any $\alpha>0$,  $P^\alpha_{t}$ can be uniquely extended to a
strongly  continuous  semigroup of contractions in  $L^{2}(H,\nu)$ whose
 infinitesimal generator we denote by  ${\mathcal L}^\alpha$. We denote by ${\mathcal L}^\alpha _0$   the differential operator   defined by 
$$
\begin{array}{lll}
{\mathcal L}^\alpha _0&=&\ds{\frac 12\;\mbox{\rm Tr}\;[(-A)^{-\gamma}D ^2\varphi]+\langle x, AD \varphi\rangle
+\langle p_\alpha (x), D \varphi\rangle}\ ,\quad
\varphi\in  \mathcal E_A(H),\;x\in H.
\end{array}
$$

\begin{Proposition}
  \label{p4.23}
Assume   that   Hypothesis  $\ref{h1}$
  holds.
Then   $\mathcal L$ is the closure of $\mathcal L_0$ in $L^2(H,\nu)$  and  ${\mathcal L}^\alpha$ is the closure of ${\mathcal L}^\alpha_0$ in $L^2(H,\nu_\alpha).$
 \end{Proposition}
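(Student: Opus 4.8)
The plan is to prove both assertions simultaneously by showing that the cylindrical space $\mathcal{E}_A(H)$ is a core for the respective generator. I would invoke the standard characterisation from semigroup theory: if $\mathcal{L}_0$ is dissipative on the dense domain $\mathcal{E}_A(H)\subset D(\mathcal{L})$ and the range $(\lambda-\mathcal{L}_0)(\mathcal{E}_A(H))$ is dense in $L^2(H,\nu)$ for some $\lambda>0$, then $\overline{\mathcal{L}_0}$ is $m$-dissipative and generates a $C_0$ contraction semigroup; since $\overline{\mathcal{L}_0}\subseteq\mathcal{L}$ and an $m$-dissipative operator admits no proper dissipative extension, one concludes $\overline{\mathcal{L}_0}=\mathcal{L}$. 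The whole problem therefore reduces to two points: the inclusion $\mathcal{L}_0\subset\mathcal{L}$ (which yields dissipativity for free) and the density of the range.

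For the inclusion I would apply It\^o's formula to $t\mapsto\varphi(X(t,x))$ for $\varphi\in\mathcal{E}_A(H)$, obtaining the identity $P_t\varphi=\varphi+\int_0^t P_s(\mathcal{L}_0\varphi)\,ds$, which shows $\mathcal{E}_A(H)\subset D(\mathcal{L})$ with $\mathcal{L}\varphi=\mathcal{L}_0\varphi$ there. The only term requiring attention is $\langle p(x),D\varphi\rangle$: since $p$ has degree $N$ and $D\varphi_h=ih\,\varphi_h$ with $h\in D(A)$, this term is dominated by $C(1+|x|_{L^{2N}(\mathcal O)}^{N})$, whose square is $\nu$-integrable precisely by the moment bound \eqref{e4.11}. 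Hence $\mathcal{L}_0\varphi\in L^2(H,\nu)$, and as a restriction of the contraction generator $\mathcal{L}$ the operator $\mathcal{L}_0$ is dissipative. The same computation run with $X_\alpha$ and the Lipschitz drift $p_\alpha$, using \eqref{e7c}, gives $\mathcal{E}_A(H)\subset D(\mathcal{L}^\alpha)$ and dissipativity of $\mathcal{L}^\alpha_0$ in $L^2(H,\nu_\alpha)$.

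The main obstacle is the range density, which I would first settle for the regularised operator $\mathcal{L}^\alpha_0$, where the Lipschitz character of $p_\alpha$ provides much stronger regularity. The useful structural fact is that the underlying Ornstein--Uhlenbeck semigroup $R_t$ leaves $\mathcal{E}_A(H)$ invariant: by the Mehler formula $R_t\varphi_h=e^{-\frac12\langle Q_th,h\rangle}\varphi_{e^{tA}h}$ with $Q_t=\int_0^t e^{sA}(-A)^{-\gamma}e^{sA}\,ds$ and $e^{tA}h\in D(A)$, so $\mathcal{E}_A(H)$ is already a core for the free generator. I would then handle the drift $\langle p_\alpha(x),D\varphi\rangle$ perturbatively: the flow $x\mapsto X_\alpha(t,x)$ is differentiable with derivative $\eta_\alpha^h$ governed by \eqref{e5b}, the Bismut--Elworthy--Li formula \eqref{e4.10} supplies a quantitative smoothing estimate for $DP^\alpha_t\varphi$, and together these allow me to show that $R(\lambda,\mathcal{L}^\alpha)$ maps smooth cylindrical data into functions approximable in the graph norm by elements of $\mathcal{E}_A(H)$. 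This gives density of $(\lambda-\mathcal{L}^\alpha_0)(\mathcal{E}_A(H))$, hence $\overline{\mathcal{L}^\alpha_0}=\mathcal{L}^\alpha$.

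Finally, the same scheme applies to $\mathcal{L}$ itself in $L^2(H,\nu)$. Here I would use Proposition \ref{p4.8}, namely $X_\alpha\to X$ in $C_W([0,T];H)$ and $\nu_\alpha\to\nu$ weakly, together with the uniform bounds \eqref{e4.11}--\eqref{e7c}, to justify for the singular drift $p$ the regularity steps that were transparent only for the Lipschitz $p_\alpha$; in particular \eqref{e4.10} passes to the limit because $\gamma<1$, yielding the strong Feller smoothing of $P_t$ needed to approximate $R(\lambda,\mathcal{L})$-images in the graph norm by $\mathcal{E}_A(H)$. The genuinely delicate point throughout is the unboundedness of $p$ and the fact that the reference measure is the nonlinear invariant measure $\nu$ rather than the Gaussian one, so the convergence $\langle p_\alpha(x),D\varphi\rangle\to\langle p(x),D\varphi\rangle$ and of the resolvents must be controlled in $L^2(H,\nu)$ uniformly in $\alpha$, the $2N$-growth being dominated by \eqref{e4.11}. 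Once this uniform control is secured, range density transfers to $\mathcal{L}_0$ and, combined with the dissipativity above, yields $\overline{\mathcal{L}_0}=\mathcal{L}$, establishing both assertions.
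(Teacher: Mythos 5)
You should first be aware that the paper itself does not prove this proposition: it cites \cite[Theorem 4.23]{Da04} for the first assertion and declares the statement for ${\mathcal L}^\alpha$ ``completely similar''. Your skeleton is nevertheless the right one, and it is essentially the strategy behind the cited result: show $\mathcal L_0\subset\mathcal L$ on $\mathcal E_A(H)$ (dissipativity then comes for free, since $\mathcal L$ generates a contraction semigroup), prove density of the range $(\lambda-\mathcal L_0)(\mathcal E_A(H))$, and conclude by the fact that an $m$-dissipative operator admits no proper dissipative extension. Your inclusion step is also sound: the only problematic term $\langle p(x),D\varphi\rangle$ is of order $|x|_{L^{2N}(\mathcal O)}^N$, whose square is $\nu$-integrable by \eqref{e4.11} (and by \eqref{e7c} in the $\alpha$ case), and the Kolmogorov identity $P_t\varphi=\varphi+\int_0^tP_s(\mathcal L_0\varphi)\,ds$ is obtained by It\^o's formula applied to the approximating equation and passage to the limit via Proposition \ref{p4.8}.

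The genuine gap is at the step you yourself call the main obstacle: the range density is asserted, not proved. The sentence claiming that the Mehler-formula invariance of $\mathcal E_A(H)$ under the Ornstein--Uhlenbeck semigroup, the derivative equation \eqref{e5b} and the Bismut--Elworthy--Li estimate \eqref{e4.10} ``allow me to show that $R(\lambda,\mathcal L^\alpha)$ maps smooth cylindrical data into functions approximable in the graph norm by elements of $\mathcal E_A(H)$'' is exactly the whole difficulty, and none of the ingredients you list produces that approximating sequence by itself: the drift term $\langle p_\alpha(x),D\varphi\rangle$ is an \emph{unbounded} perturbation of the Ornstein--Uhlenbeck generator (it grows at least linearly in $|x|$ even for Lipschitz $p_\alpha$), so no bounded-perturbation or naive smoothing argument applies; in \cite{Da04} this step requires an explicit construction through finite-dimensional (Galerkin-type) reductions and regularity estimates for the approximating Kolmogorov equation, carried out in the correct $L^2$ space. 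A second, structural problem affects your final paragraph: density of $(\lambda-\mathcal L^\alpha_0)(\mathcal E_A(H))$ holds in $L^2(H,\nu_\alpha)$, while what you need for $\mathcal L_0$ is density in $L^2(H,\nu)$; these are different spaces, $\nu$ is not invariant for $P^\alpha_t$ (so $P^\alpha_t$ is not even a contraction semigroup on $L^2(H,\nu)$), and the weak convergence $\nu_\alpha\to\nu$ together with the uniform moment bounds \eqref{e4.11}--\eqref{e7c} does not convert a density statement in one space into a density statement in the other. Since the proposition stands or falls with this step, what you have is a correct outline of the standard strategy rather than a proof; to complete it one must either carry out the core construction directly for $\mathcal L_0$ in $L^2(H,\nu)$, or do as the paper does and invoke \cite[Theorem 4.23]{Da04}.
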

  The first assertion of the proposition is proved in \cite[Theorem 4.23]{Da04}, the proof of the latter is completely similar and so, it is omitted.\bigskip

 Now we are ready to describe the main goal of the paper. First, we shall prove the following integration by parts formula for the invariant  measure $\nu$.  
For any  $h\in H$ and  any  $\varphi\in C^1_b(H)$ there exists a function  $v^h\in L^2(H,\nu)$ such that
\begin{equation}
\label{e8c}
\int_H \langle (-A)^{-1} D\varphi(x),h\rangle\,\nu(dx)=\int_H \varphi(x)\,v^h(x)\,\nu(dx).
\end{equation}
Then we deduce by  \eqref{e8c}
   the existence  of  the Fomin derivative of $\nu$ in any direction $A^{-1}h$. \footnote{For the definition of  Fomin derivative see e.g.  \cite{Pu98}. }

A similar result, concerning the Burgers equation driven by white noise, has been proved in \cite{DaDe14}. In the present case  the proof of \eqref{e8c} is based, as in \cite{DaDe14}, on an estimate of $P_tD\varphi$ depending only on $\|\varphi\|_{L^2(H,\nu)}$. However, the techniques  used here   are obviously different.

We believe that our method could be used for other SPDEs as: singular dissipative equations and $2D$--Navier--Stokes equations. Both will be the object of future work.
  \medskip

In Section 2 we prove an identity relating
$ DP^\alpha_t\varphi$ and $P^\alpha_tD\varphi$. Using this identity in  Section 3 we prove the estimate
 \begin{equation}
\label{e14e}
\int _H \langle D\varphi(x),h\rangle  \, \nu (dx) \le C\|\varphi\|_{L^2(H,\nu)} \,|Ah|_H,\quad \varphi\in L^2(H,\nu).
\end{equation}
Finally, Section 4 is   devoted to show some consequences as the definition of Sobolev space with respect to the measure $\nu$.

\section{An identity relating $DP_t^\alpha\varphi$ and  $P_t^\alpha D\varphi$}

  \begin{Proposition}
  \label{p1}
  For any  $\varphi\in C^1_b(H)$,  $\alpha>0$, $h,x\in D(A)$,  we have
  \begin{equation}
\label{e7}
\begin{array}{l}
  \ds P^\alpha_t(\langle D\varphi(x),h\rangle)= \langle DP^\alpha_t\varphi(x), h\rangle 
  \ds-   \int_0^tP^\alpha_{t-s}(\langle Ah+Dp_\alpha(x)h  ,D P^\alpha_s\varphi(x)\rangle)ds,
  \end{array}
\end{equation}
where $p^\alpha$ are the Yosida approximations of $p$.
   
\end{Proposition}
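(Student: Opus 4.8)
The plan is to establish \eqref{e7} by a variation-of-constants (interpolation) argument along the semigroup, whose heart is a commutator identity between the directional derivative $\langle D\,\cdot\,,h\rangle$ and the Kolmogorov operator $\mathcal L^\alpha_0$. Set $v_s:=P^\alpha_s\varphi$ and introduce, for fixed $t>0$ and $h,x\in D(A)$, the scalar function
$$
u(s):=P^\alpha_{t-s}\big(\langle Dv_s(\cdot),h\rangle\big)(x),\qquad s\in[0,t].
$$
Since $p_\alpha$ is Lipschitz with bounded derivative $p'_\alpha$, the flow $x\mapsto X_\alpha(t,x)$ is (twice) differentiable and $P^\alpha_s$ maps $C^1_b(H)$ into a class of regular functions for which Kolmogorov's equation $\partial_s v_s=\mathcal L^\alpha_0 v_s$ holds classically; I would first record these regularity facts so that all manipulations below are licit. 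The two endpoints then produce exactly the terms in the statement: $u(t)=\langle DP^\alpha_t\varphi(x),h\rangle$ and $u(0)=P^\alpha_t(\langle D\varphi,h\rangle)(x)$, so it suffices to show $u(t)-u(0)=\int_0^t P^\alpha_{t-s}\big(\langle Ah+Dp_\alpha(x)h,DP^\alpha_s\varphi(x)\rangle\big)\,ds$.

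Next I would differentiate $u$. Writing $u(s)=G(s,t-s)$ with $G(\sigma,\tau)=P^\alpha_\tau(\langle Dv_\sigma,h\rangle)(x)$, the chain rule together with Kolmogorov's equation (used both in the time index $\tau$ of the semigroup and in the evolution of $v_\sigma$) gives
$$
u'(s)=P^\alpha_{t-s}\Big(\langle D\mathcal L^\alpha_0 v_s,h\rangle-\mathcal L^\alpha_0\langle Dv_s,h\rangle\Big)(x).
$$
Everything then reduces to the commutator $\langle D\mathcal L^\alpha_0 v,h\rangle-\mathcal L^\alpha_0\langle Dv,h\rangle$ for a smooth $v$. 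The second-order (trace) term has constant coefficients, hence commutes with the constant-direction derivative $\langle D\,\cdot\,,h\rangle$ and contributes nothing. For the first-order part, written via self-adjointness of $A$ as $\langle Ax+p_\alpha(x),Dv(x)\rangle$, only the $x$-dependence of the drift survives the differentiation, yielding $\langle D(Ax+p_\alpha(x))h,Dv(x)\rangle=\langle Ah+p'_\alpha(x)h,Dv(x)\rangle$. Thus $u'(s)=P^\alpha_{t-s}\big(\langle Ah+Dp_\alpha(x)h,DP^\alpha_s\varphi(x)\rangle\big)(x)$, and integrating over $[0,t]$ and rearranging gives \eqref{e7}.

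The main obstacle is not the algebra — the commutator is a short computation — but the justification of the regularity and of the interchanges of differentiation, expectation, and the operator $\mathcal L^\alpha_0$. Concretely, I must verify that $P^\alpha_s\varphi$ is $C^2$ with derivatives controlled uniformly enough that $s\mapsto u(s)$ is $C^1$, that $\langle Ax+p_\alpha(x),Dv\rangle$ and the trace term are well defined along the flow (this is precisely where $h,x\in D(A)$ and the boundedness of $p'_\alpha$ enter), and that $\partial_s P^\alpha_s\varphi=\mathcal L^\alpha_0 P^\alpha_s\varphi$ holds pointwise. These follow from the Lipschitz character of $p_\alpha$ together with the smoothing of the Gaussian part of the dynamics. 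Alternatively, one may obtain the same identity probabilistically, by applying It\^o's formula to $s\mapsto\langle DP^\alpha_{t-s}\varphi(X_\alpha(s,x)),h\rangle$ and taking expectations: this kills the martingale part and produces the drift term $\langle Ah+p'_\alpha(X_\alpha(s,x))h,DP^\alpha_{t-s}\varphi(X_\alpha(s,x))\rangle$, leading to \eqref{e7} after the substitution $s\mapsto t-s$.
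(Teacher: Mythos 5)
Your proposal is correct, and its computational engine coincides with the paper's: everything rests on the commutator identity $\langle D\mathcal L^\alpha_0 v,h\rangle-\mathcal L^\alpha_0\langle Dv,h\rangle=\langle Ah+p'_\alpha(\cdot)h,Dv\rangle$, and your interpolation function $u(s)=P^\alpha_{t-s}\bigl(\langle DP^\alpha_s\varphi,h\rangle\bigr)(x)$ is precisely the standard proof of the variation-of-constants formula that the paper invokes after writing the perturbed Kolmogorov equation $D_tv^\alpha_h=\mathcal L^\alpha v^\alpha_h+\langle Ah+p'_\alpha(x)h,Du^\alpha\rangle$ for $v^\alpha_h(t,x)=\langle DP^\alpha_t\varphi(x),h\rangle$. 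The genuine difference is where the regularity burden falls. The paper first proves \eqref{e7} for $\varphi\in\mathcal E_A(H)$, i.e.\ for exponentials with frequencies in $D(A)$: there $D\varphi(x)\in D(A)$, so the drift term can be read as $\langle x,AD\varphi(x)\rangle$ for every $x\in H$, and Proposition \ref{p4.23} guarantees $\varphi\in D(\mathcal L^\alpha)$ with $\mathcal L^\alpha\varphi=\mathcal L^\alpha_0\varphi$; the case $\varphi\in C^1_b(H)$ is then obtained from the core property of $\mathcal E_A(H)$. You instead work directly with $\varphi\in C^1_b(H)$, so you must prove that $P^\alpha_s\varphi$ is twice differentiable, that $\mbox{\rm Tr}\,[(-A)^{-\gamma}D^2P^\alpha_s\varphi]$ is finite (not automatic: $(-A)^{-\gamma}$ is in general not trace class under Hypothesis \ref{h1}), and that $\langle Ax+p_\alpha(x),DP^\alpha_s\varphi(x)\rangle$ makes sense at points $x\notin D(A)$, which amounts to $DP^\alpha_s\varphi(x)\in D(A)$ or a weak reformulation. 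You flag these smoothing facts honestly but do not discharge them; they do hold for Lipschitz $p_\alpha$ (essentially in the framework of \cite{Ce01}), but they are a heavier input than the paper's core-plus-density device, which is designed exactly to avoid them. Your probabilistic alternative (It\^o's formula applied to $s\mapsto\langle DP^\alpha_{t-s}\varphi(X_\alpha(s,x)),h\rangle$) bypasses these analytic issues and needs only differentiability of the flow and first-order smoothing of $P^\alpha_t$; it is arguably the cleanest way to complete your plan rigorously.
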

\begin{proof}

      Let     $\varphi\in\mathcal E_A(H)$, $u^\alpha (t,x)=P^\alpha _t\varphi(x)$. Then $\varphi\in D({\mathcal L}^\alpha )$ and so,
  $$
  D_tu^\alpha (t,x)={\mathcal L}^\alpha  u^\alpha(t,x)=
  P^\alpha _t\left( \frac12\mbox{\rm Tr}\;[(-A)^\gamma D^2\varphi(x)]+\langle Ax+p^\alpha (x), D\varphi(x)\rangle   \right)
  $$
  Now let  $h\in D(A)$. Then setting  $v^\alpha _h(t,x)=\langle DP^\alpha _t\varphi(x),h\rangle$, we have
  $$
  D_tv^\alpha _h(t,x)= {\mathcal L}^\alpha  v_h^\alpha(t,x) +\langle Ah+p'_\alpha (x)h, Du^\alpha (t,x)\rangle 
  $$
  and by the variation of constants formula we deduce that 
  $$
  v^\alpha _h(t,x)=P^\alpha_tv^\alpha _h(0,x)+ \int_0^tP^\alpha _{t-s}(\langle Ah+p'_\alpha (x)h  ,D P^\alpha _s\varphi(x)\rangle)ds
  $$
  which is equivalent to
  $$
\begin{array}{l}
  \ds P^\alpha _t(\langle D\varphi(x),h\rangle)= \langle DP^\alpha _t\varphi(x), h\rangle\\
  \\
  \ds-   \int_0^tP^\alpha _{t-s}(\langle Ah+p'_\alpha (x)h  ,D P^\alpha _s\varphi(x)\rangle)ds.
  \end{array} 
$$  for all  $\varphi\in \mathcal E_A(H).$ 
  Since   $\mathcal E_A(H)$ is a core for  ${\mathcal L}^\alpha $ (Proposition \ref{p4.23}), the conclusion follows.
  
  \end{proof} 
  
  \begin{Remark}
  \em Probably identity  \eqref{e7} could be useful also in finite dimensions for SDEs with non degenerate noise. In fact  \eqref{e7} looks   simpler  than the formula obtained via   Maliavin Calculus, even if the latter allows to consider non degenerate equations, see \cite{Ma97}, \cite{Sa05}.
  
  \end{Remark} 
  
  \section{The main result}
  
  We first need a lemma.
  \begin{Lemma}
\label{l7b}
For any $\alpha>0$, $T>0$  and any $h\in H$ we have
\begin{equation}
\label{e11d}
|\eta_\alpha ^{h}(T,x)| \le  |h|,\quad x\in H,
\end{equation}
and
\begin{equation}
\label{e4.17}
\int_0^T |(-A)^{1/2}\eta_\alpha ^{h}(t,x)|^2dt\le  |h|^2,\quad x\in H.
\end{equation}
Finally, for any $\beta\in (0,1/2)$   we have
\begin{equation}
\label{e13d}
\int_0^T |(-A)^{\beta}\eta_\alpha ^{h}(t,x)|^2dt\le  C_{T,\beta} T^{1-2\beta}|h|^2,\quad x\in H.
\end{equation}
 
\end{Lemma}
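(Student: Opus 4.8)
The three bounds are all \emph{pathwise} statements: for $\P$--almost every $\omega$ and every fixed $x$, equation \eqref{e5b} is a linear, non-autonomous parabolic equation for $\eta_\alpha^h(\cdot,x)$ whose zeroth order coefficient $\xi\mapsto p'_\alpha(X_\alpha(t,x,\xi))$ is a bounded multiplication operator (recall that $p_\alpha$ is Lipschitz). My plan is therefore purely deterministic: I would obtain \eqref{e11d} and \eqref{e4.17} by the standard energy (multiplier) method, and then deduce \eqref{e13d} from them by interpolation. The decisive structural fact, used only through its \emph{sign}, is that $p$---hence its Yosida approximation $p_\alpha$---is decreasing; this is exactly what makes the estimates uniform in $\alpha$.

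First I would take the scalar product of \eqref{e5b} with $\eta_\alpha^h(t,x)$ in $H=L^2(\mathcal O)$. Using that $A$ is self-adjoint with $\langle Au,u\rangle=-|(-A)^{1/2}u|^2$, this gives
$$
\frac12\,\frac{d}{dt}\,|\eta_\alpha^h(t,x)|^2+|(-A)^{1/2}\eta_\alpha^h(t,x)|^2\le 0,
$$
where the zeroth order contribution has been discarded because, $p_\alpha$ being decreasing, it enters with the favourable (nonpositive) sign. Dropping the gradient term and integrating in time yields $|\eta_\alpha^h(t,x)|^2\le|h|^2$ for all $t$, which is \eqref{e11d}. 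Keeping the gradient term and integrating over $[0,T]$ instead gives
$$
\tfrac12\,|\eta_\alpha^h(T,x)|^2+\int_0^T|(-A)^{1/2}\eta_\alpha^h(t,x)|^2\,dt\le\tfrac12\,|h|^2,
$$
whence \eqref{e4.17} follows after discarding the first term.

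For \eqref{e13d} I would interpolate between the $L^2$ bound furnished by \eqref{e11d} and the $H^1_0$ bound furnished by \eqref{e4.17}. Writing $u=\eta_\alpha^h(t,x)$, the spectral theorem for $-A$ gives, for $0<\beta<1/2$,
$$
|(-A)^\beta u|\le |u|^{1-2\beta}\,|(-A)^{1/2}u|^{2\beta}.
$$
Squaring, integrating over $[0,T]$ and applying Hölder's inequality with the conjugate exponents $\frac{1}{1-2\beta}$ and $\frac{1}{2\beta}$ produces
$$
\int_0^T|(-A)^\beta u|^2\,dt\le\left(\int_0^T|u|^2\,dt\right)^{1-2\beta}\left(\int_0^T|(-A)^{1/2}u|^2\,dt\right)^{2\beta}.
$$
Bounding the first factor by $(T|h|^2)^{1-2\beta}$ by means of \eqref{e11d} and the second by $(|h|^2)^{2\beta}$ by means of \eqref{e4.17} leaves exactly $T^{1-2\beta}|h|^2$, i.e.\ \eqref{e13d} (in fact with $C_{T,\beta}=1$).

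The only genuinely delicate point is the rigorous justification of the energy identity when $h$ is merely in $H$: a priori $\eta_\alpha^h(t,x)$ need not lie in $D((-A)^{1/2})$ at $t=0$, so the multiplication by $\eta_\alpha^h$ and the integration by parts must be carried out first for data in $D(A)$ and then extended, using the density of $D(A)$ in $H$ together with the smoothing of the analytic semigroup $e^{tA}$ (which places $\eta_\alpha^h(t,x)$ in $D((-A)^{1/2})$ for $t>0$ and renders $\int_0^T|(-A)^{1/2}\eta_\alpha^h|^2\,dt$ finite). Once this is in place the remaining steps are routine; I would stress that at no stage is any quantitative bound on $p'_\alpha$ invoked---only its sign---so that all constants are independent of $\alpha$, which is precisely the uniformity the later argument needs.
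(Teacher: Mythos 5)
Your proof is correct and follows essentially the same route as the paper: an energy estimate obtained by testing \eqref{e5b} with $\eta_\alpha^h$ and discarding the zeroth order term thanks to $p'_\alpha\le 0$, followed by the interpolation inequality $|(-A)^\beta u|\le |u|^{1-2\beta}|(-A)^{1/2}u|^{2\beta}$ and H\"older in time. In fact your bookkeeping of the exponents in the interpolation/H\"older step is cleaner than the paper's (which drops some squares and ends with a power of $T$ inconsistent with the statement), and your version yields \eqref{e13d} exactly with $C_{T,\beta}=1$.
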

\begin{proof}
By \eqref{e5b} and $p'_\alpha\le 0$, we have
 \begin{equation}
\label{e9b}
\ds{\frac12\frac{d}{dt}\;|\eta_\alpha ^{h}(t,x)}|^2+\frac12|(-A)^{1/2}\eta_\alpha ^{h}(t,x)|^2=\int_\mathcal O p'_\alpha(X_\alpha(t,x)) 
 (\eta_\alpha^{h}(t,x))^2d\xi\le 0.
\end{equation}
Integrating  in $t$ from $0$ to $T$, yields
$$
|\eta_\alpha ^{h}(T,x)|^2+\int_0^T|(-A)^{1/2}\eta_\alpha ^{h}(t,x)|^2\,dt\le |h|^2.
$$
So, \eqref{e11d} and \eqref{e4.17} follow. It remains to show \eqref{e13d}.

Let  us recall a well known  estimate from interpolation.
For $0<\beta<1/2$ we have
\begin{equation}
\label{e15d}
|(-A)^\beta x|\le |x|^{1-2\beta}\;|(-A)^{1/2} x|^{2\beta},\quad\forall\;x\in D((-A)^{1/2}).
\end{equation}
It follows that
$$
 \int_0^T |(-A)^{\beta}\eta_\alpha ^{h}(t,x)|^2dt\le \int_0^T |\eta_\alpha ^{h}(t,x)|^{1-2\beta}\;|(-A)^{1/2} \eta_\alpha ^{h}(t,x)|^{2\beta}dt.
$$
Recalling \eqref{e11d}  and using H\"older's inequality it follows that
$$
\begin{array}{l}
 \ds\int_0^T |(-A)^{\beta}\eta_\alpha ^{h}(t,x)|^2dt\\
 \\
 \ds\le \int_0^T |(-A)^{1/2} \eta_\alpha ^{h}(t,x)|^{2\beta}dt\;|h|^{1-2\beta}\\
 \\
 \ds\le T^{1/2-\beta}\left[ \int_0^T |(-A)^{1/2} \eta_\alpha ^{h}(t,x)|^{2}dt  \right]^\beta\;|h|^{1-2\beta} 
 \end{array}
$$
Finally, taking into account \eqref{e4.17}, yields
\begin{equation}
\label{e16d}
\int_0^T |(-A)^{\beta}\eta_\alpha ^{h}(t,x)|^2dt\le  T^{1/2-\beta} |h|^2, 
\end{equation}
 as claimed.

 \end{proof}

We are now ready to show
  \begin{Theorem}
  \label{t2}
   There exists  $C>0$ such that  for all       $\varphi\in L^2(H,\nu)$ and all $h\in D(A)$ we have
 \begin{equation}
\label{e14}
\int _H \langle D\varphi(x),h\rangle  \, \nu (dx) \le C\|\varphi\|_{L^2(H,\nu)} \,|Ah|_H.
\end{equation}

 \end{Theorem}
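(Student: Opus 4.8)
The plan is to start from the identity \eqref{e7} of Proposition \ref{p1} and integrate it against the approximating invariant measure $\nu_\alpha$. Since $\nu_\alpha$ is invariant for $P^\alpha_t$, integrating the left-hand side gives $\int_H\langle D\varphi,h\rangle\,\nu_\alpha(dx)$, while integrating the convolution term removes every factor $P^\alpha_{t-s}$, so that Fubini produces
\begin{equation*}
\int_H\langle D\varphi,h\rangle\,\nu_\alpha(dx)=\int_H\langle DP^\alpha_t\varphi,h\rangle\,\nu_\alpha(dx)-\int_0^t\!\!\int_H\langle Ah+p'_\alpha(x)h,DP^\alpha_s\varphi\rangle\,\nu_\alpha(dx)\,ds.
\end{equation*}
It therefore suffices to bound both right-hand terms by $C\|\varphi\|_{L^2(H,\nu_\alpha)}\,|Ah|$ with $C$ independent of $\alpha$, and then let $\alpha\to0$.

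The engine will be a pointwise gradient estimate extracted from the Bismut--Elworthy--Li formula \eqref{e4.10}. For a (possibly $x$-dependent) direction $g$, Cauchy--Schwarz in $\Omega$ together with the It\^o isometry gives
\begin{equation*}
|\langle DP^\alpha_s\varphi(x),g\rangle|\le \frac1s\big(P^\alpha_s(\varphi^2)(x)\big)^{1/2}\Big(\int_0^s|(-A)^{\gamma/2}\eta^g_\alpha(r,x)|^2dr\Big)^{1/2}.
\end{equation*}
Applying Lemma \ref{l7b} with $\beta=\gamma/2$ (admissible since $\gamma<1$) bounds the last integral by $s^{1-\gamma}|g|^2$, whence $|\langle DP^\alpha_s\varphi(x),g\rangle|\le s^{-(1+\gamma)/2}(P^\alpha_s(\varphi^2)(x))^{1/2}|g|$. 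Integrating against $\nu_\alpha$, a further Cauchy--Schwarz and the invariance identity $\int_H P^\alpha_s(\varphi^2)\,d\nu_\alpha=\|\varphi\|^2_{L^2(H,\nu_\alpha)}$ yield
\begin{equation*}
\int_H|\langle DP^\alpha_s\varphi,g\rangle|\,\nu_\alpha(dx)\le s^{-(1+\gamma)/2}\,\|\varphi\|_{L^2(H,\nu_\alpha)}\Big(\int_H|g|^2\,\nu_\alpha(dx)\Big)^{1/2}.
\end{equation*}

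I would then apply this with $g=h$ for the first term and with $g=Ah+p'_\alpha(x)h$ for the convolution term. The first term is immediate since $|h|\le C|Ah|$ by the Poincar\'e inequality. For the second I must control $\int_H|Ah+p'_\alpha(x)h|^2\,\nu_\alpha(dx)$ uniformly in $\alpha$; this is the crux. Here I use that $n<4$ forces $D(A)=H^2(\mathcal O)\cap H^1_0(\mathcal O)\hookrightarrow L^\infty$, so $|h|_\infty\le C|Ah|$, together with the uniform growth bound $|p'_\alpha(r)|\le C(1+|r|^{N-1})$ following from the dissipativity of $p$, and the uniform moment bound \eqref{e7c}, which via H\"older controls $\int_H|x|^{2(N-1)}_{L^{2(N-1)}(\mathcal O)}\,\nu_\alpha(dx)$. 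Combining these gives $\int_H|Ah+p'_\alpha(x)h|^2\,\nu_\alpha(dx)\le C|Ah|^2$ with $C$ independent of $\alpha$. Since $(1+\gamma)/2<1$, the factor $s^{-(1+\gamma)/2}$ is integrable on $(0,t)$; fixing $t=1$ then yields $\big|\int_H\langle D\varphi,h\rangle\,\nu_\alpha(dx)\big|\le C\|\varphi\|_{L^2(H,\nu_\alpha)}|Ah|$ uniformly in $\alpha$.

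Finally I would pass to the limit $\alpha\to0$. For $\varphi\in C^1_b(H)$ both $\langle D\varphi,h\rangle$ and $\varphi^2$ are bounded and continuous, so the weak convergence $\nu_\alpha\rightharpoonup\nu$ recalled after Proposition \ref{p4.16} transfers the estimate to $\nu$, giving \eqref{e14} on $C^1_b(H)$; density then extends the bounded functional $\varphi\mapsto\int_H\langle D\varphi,h\rangle\,\nu(dx)$ to all of $L^2(H,\nu)$. I expect the only genuinely delicate point to be the $\alpha$-uniform estimate of the reaction term $p'_\alpha(x)h$, where the interplay between the Sobolev embedding ($n<4$), the polynomial growth of $p'$, and the uniform moment bound \eqref{e7c} must all be used simultaneously.
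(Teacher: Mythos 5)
Your proof is correct and is essentially the paper's own argument: the same integration of \eqref{e7} against $\nu_\alpha$, the same Bismut--Elworthy--Li gradient estimate obtained from Lemma \ref{l7b} and applied with $g=h$ and $g=Ah+p'_\alpha(x)h$, the same combination of Sobolev embedding, the uniform growth of $p'_\alpha$, and the moment bound \eqref{e7c} to control the reaction term, and the same limit $\alpha\to 0$ via Proposition \ref{pA1}; your deviations (bounding $\|p'_\alpha h\|_{L^2(H,\nu_\alpha)}$ uniformly in $\alpha$ \emph{before} the limit rather than after, and using the $L^\infty$ rather than an $L^r$ embedding of $D(A)$) are cosmetic and if anything slightly cleaner. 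One small caveat: your choice $\beta=\gamma/2$ in Lemma \ref{l7b} needs $\gamma>0$, not just $\gamma<1$, which Hypothesis \ref{h1} guarantees only for $n\ge 2$; for $n=1$ with $\gamma\le 0$ the operator $(-A)^{\gamma/2}$ is bounded and \eqref{e4.17} gives the required estimate directly, which is why the paper instead picks $\beta\in(0,1/2)$ with $2\beta>\gamma$ and factors $(-A)^{\gamma/2}=(-A)^{(\gamma-2\beta)/2}(-A)^{\beta}$.
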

 \begin{proof} 
  Let us integrate   identity  \eqref{e7}  
   with respect to $\nu_\alpha$ over $H$. Taking into account the invariance of $\nu_\alpha$, we obtain
\begin{equation}
\label{e8}
\begin{array}{lll}
 \ds \int_H \langle D\varphi(x), h\rangle \nu_\alpha(dx)&=&\ds \int_H\langle DP^\alpha_t\varphi(x),h\rangle\,\nu_\alpha(dx)\\
 \\
&&- \ds  \int_0^t\int_H \langle Ah+p'_\alpha(x)h  ,DP^\alpha_s\varphi(x) \rangle ds \,\nu_\alpha(dx).
 \end{array}
\end{equation}
We are going to estimate
\begin{equation}
\label{e9}
\int_H\langle DP^\alpha_t\varphi(x),h\rangle\,\nu_\alpha(dx)
\end{equation}
  and
  \begin{equation}
\label{e10}
 \int_0^t\int_H \langle Ah+p'_\alpha(x) h  ,DP^\alpha_s\varphi(x) \rangle ds \,\nu_\alpha(x),
\end{equation}
  using  the Bismut--Elworthy-Li  formula \eqref{e4.10}. First notice that by \eqref{e4.10} it follows that
  $$
  \langle DP^\alpha_t\varphi(x), h\rangle^2\le\frac{1}{t^2}\;\E[\varphi^2(X_\alpha(t,x))]\;\E\int_0^t|(-A)^{\gamma/2}D_xX_\alpha(s,x)h|^2ds
  $$
  In view of Hypothesis \ref{h1} we can choose now $\beta<1/2$ such that
  $$
  \frac{n}2-1<\gamma<2\beta.
  $$
      The operator $ (-A)^{(\gamma-2\beta)/2}$ is bounded, set
  \begin{equation}
  \label{e21f}
 K:= \|(-A)^{(\gamma-2\beta)/2}\|.
  \end{equation}
 Then, since
$  (-A)^{\gamma/2}=(-A)^{(\gamma-2\beta)/2}(-A)^{\beta}
  $, 
  $$
  \E\int_0^t|(-A)^{\gamma/2}D_xX_\alpha(s,x)h|^2ds\le K^2\E\int_0^t|(-A)^{\beta}D_xX_\alpha(s,x)h|^2ds
  $$
  Taking into account \eqref{e13d}    we  find
 $$
 \langle DP^\alpha_t\varphi(x), h\rangle^2\le  \frac{K^2}{t^{1+2\beta}}\;P^\alpha_t(\varphi^2)(x)|h|^2.
$$
   Equivalently
   $$
  \langle DP^\alpha_t\varphi(x), h\rangle\le Kt^{-1/2-\beta}\;[P^\alpha_t(\varphi^2)(x)]^{1/2}\,|h|.
  $$
  Integrating with respect to  $\nu_\alpha$ over $H$, yields for a function $h\in L^2(H,\nu_\alpha)$,
  $$
  \begin{array}{l}
\ds  \int_H\langle DP^\alpha_t\varphi(x), h(x)\rangle\nu_\alpha(dx)\le Kt^{-1/2-\beta}\;\int_H[P^\alpha_t(\varphi^2)(x)]^{1/2}\,|h(x)|\nu_\alpha(dx)\\
\\
\ds\le K t^{-1/2-\beta}\;\left(\int_H P^\alpha_t(\varphi^2) \,\nu_\alpha(dx)\right)^{1/2}\;\left(\int_H |h(x)|^2\nu_\alpha(dx)\right)^{1/2},
  \end{array}
  $$
  that is, taking into account the invariance of  $\nu_\alpha$
  \begin{equation}
\label{e11}
\int_H\langle DP^\alpha_t\varphi(x), h(x)\rangle\nu_\alpha(dx)\le Kt^{-1/2-\beta}\;\|\varphi\|_{L^2(H,\nu_\alpha)} \;\|h\|_{L^2(H,{\nu_\alpha})}
\end{equation} 
  Now we can estimate \eqref{e9} and \eqref{e10}. As for \eqref{e9} we have by \eqref{e11}
  \begin{equation}
\label{e15b}
 \int_H\langle DP^\alpha_t\varphi(x), h\rangle\nu_\alpha(dx)\le   Kt^{-1/2-\beta}\;\|\varphi\|_{L^2(H,{\nu_\alpha})} \;|h| 
\end{equation}
and as for \eqref{e10} 
 \begin{equation}
\label{e16b}
 \begin{array}{l}
 \ds \int_0^t\int_H \langle Ah+{p'_\alpha}(x)h  ,DP^\alpha_s\varphi(x) \rangle ds \,d\nu_\alpha\\
 \\
 \ds\le {K\int_0^ts^{-1/2-\beta}\,ds}\,\|\varphi\|_{L^2(H,\nu)} \,(| Ah|+{\|p'_\alpha\,h\|_{L^2(H,\nu_\alpha})}
  \end{array}
\end{equation}
Now, using  \eqref{e11} and \eqref{e15b}  we deduce (recall that $1/2+\beta<1$)  
 \begin{equation}
\label{e24d}
\begin{array}{l}
 \ds \int_H \langle D\varphi(x), h\rangle \nu_\alpha(dx)\le   Kt^{-1/2-\beta}\;\|\varphi\|_{L^2(H,\nu_\alpha)} \;{|h|}  \\
 \\
\ds\ds{+\frac{2Kt^{1/2-\beta}}{1-2\beta}\;\|\varphi\|_{L^2(H,\nu)} \,(| Ah|+{\|p'_\alpha\,h\|_{L^2(H,\nu_\alpha})}}.
\end{array}
\end{equation}
 Setting  $t=1$ in \eqref{e24d} and letting $\alpha\to 0$, we arrive, recalling Proposition \ref{pA1} below, at 
{ \begin{equation}
\label{e14a}
\int _H \langle D\varphi(x),h\rangle  \, \nu (dx) \le C\|\varphi\|_{L^2(H,\nu)} (|Ah|+\|p'\,h\|_{L^2(H,\nu)}).
\end{equation}}
Thanks to Sobolev embedding, we choose $r> 2$ such that $D(A)\subset L^r(\mathcal O)$. Then thanks
to H\"older inequality and  \eqref{e7c}, there exists $C_r>0, C'_r>0$ such that
$$
\|p'\,h\|_{L^2(H,\nu)}\le C_r|h|_{L^{r}(\mathcal O)}\le   C'_r |Ah|,\quad\forall\;h\in D(A).
$$
   Thus the conclusion follows.

\end{proof}

\section{Some consequences of Theorem \ref{t2}}
\begin{Proposition}
\label{p2}
Assume that  estimate \eqref{e14} is fulfilled. Then     the linear operator
 $$
\varphi\in C^1_b(H)\mapsto (-A)^{-1} D\varphi\in C_b(H;H),$$
is closable in $L^2(H,\nu)$.\end{Proposition}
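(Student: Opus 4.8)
The plan is to verify closability straight from the definition. Recall that the operator $T\colon \varphi\mapsto (-A)^{-1}D\varphi$ maps $C^1_b(H)$ into $C_b(H;H)\subset L^2(H,\nu;H)$, so I must show: if $(\varphi_n)\subset C^1_b(H)$ satisfies $\varphi_n\to 0$ in $L^2(H,\nu)$ and $(-A)^{-1}D\varphi_n\to G$ in $L^2(H,\nu;H)$, then $G=0$.

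First I would recast estimate \eqref{e14} in a form adapted to the operator at hand. Applying \eqref{e14} to both $\varphi$ and $-\varphi$ yields the absolute bound $|\int_H\langle D\varphi,h\rangle\,\nu(dx)|\le C\|\varphi\|_{L^2(H,\nu)}\,|Ah|$. Since $(-A)^{-1}$ maps $H$ into $D(A)$ with $A(-A)^{-1}h=-h$, writing $\langle (-A)^{-1}D\varphi,h\rangle=\langle D\varphi,(-A)^{-1}h\rangle$ turns this into the equivalent statement that for every $h\in H$ and $\varphi\in C^1_b(H)$, $|\int_H\langle (-A)^{-1}D\varphi(x),h\rangle\,\nu(dx)|\le C\|\varphi\|_{L^2(H,\nu)}\,|h|$.

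Next, to prove $G=0$ it suffices to check $\int_H\langle G(x),h\rangle\psi(x)\,\nu(dx)=0$ for all $h\in H$ and all $\psi\in C^1_b(H)$, because products $\psi\,h$ span a dense subspace of $L^2(H,\nu;H)$ (write $F=\sum_k\langle F,e_k\rangle e_k$ along an orthonormal basis and approximate each scalar component in $C^1_b(H)$). Fix such $h,\psi$ and set $k=(-A)^{-1}h\in D(A)$, so that $|Ak|=|h|$. By the assumed $L^2(H,\nu;H)$-convergence, $\int_H\langle G(x),h\rangle\psi(x)\,\nu(dx)=\lim_n\int_H\langle D\varphi_n(x),k\rangle\psi(x)\,\nu(dx)$. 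Here I would use the Leibniz rule $\psi\,\langle D\varphi_n,k\rangle=\langle D(\varphi_n\psi),k\rangle-\varphi_n\,\langle D\psi,k\rangle$. Since $\varphi_n\psi\in C^1_b(H)$, estimate \eqref{e14} applied to $\varphi_n\psi$ bounds the integral of the first term by $C\|\varphi_n\psi\|_{L^2(H,\nu)}\,|h|\le C\|\psi\|_\infty\|\varphi_n\|_{L^2(H,\nu)}\,|h|\to 0$, while the integral of the second term tends to $0$ by Cauchy--Schwarz because $\langle D\psi,k\rangle$ is bounded and $\varphi_n\to 0$ in $L^2(H,\nu)$. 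Hence the limit vanishes, giving $G=0$ and closability.

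The only genuinely delicate point is this reformulation step together with the realization that one must test against the \emph{product} $\varphi_n\psi$ rather than $\varphi_n$ alone: estimate \eqref{e14} is a priori merely a bound in constant directions $h$, and replacing $\varphi_n$ by $\varphi_n\psi$ is precisely what lets one exploit it against the variable vector fields $\psi(x)h$ needed to separate components of $G$. Once this is seen, the remaining passages are routine limit arguments.
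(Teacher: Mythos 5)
Your proof is correct and takes essentially the same route as the paper: the key step in both is to apply estimate \eqref{e14} to the product $\varphi_n\psi$, use the Leibniz rule, and let the direction $k=(-A)^{-1}h$ absorb the factor $|Ah|$. The only difference is organizational — the paper first proves closability of the scalar operators $\varphi\mapsto\langle(-A)^{-1}D\varphi,h\rangle$ and then assembles the vector statement componentwise along an eigenbasis of $A$, whereas you reduce directly to scalar tests $\int_H\langle G,h\rangle\psi\,d\nu=0$ via density of $C^1_b(H)$ in $L^2(H,\nu)$ — but the underlying idea is identical.
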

 
\begin{proof}   

 {\it Step 1}. For any  $h\in H$  the linear operator
 $$
\varphi\in C^1_b(H)\mapsto \langle (-A)^{-1} D\varphi(x),h\rangle \in C_b(H)$$
is closable in  $L^2(H,\nu).$\medskip

In fact, let  $(\varphi_n)\subset   C^1_b(H)$ and  $f\in L^2(H,\nu)$ be such that
 $$
\left\{\begin{array}{l}
\ds \varphi_n\to 0\;\;\mbox{\rm in}\;L^2(H,\nu),\\
\\\ds 
\langle (-A)^{-1} D\varphi_n(x),h\rangle \to f\;\;\mbox{\rm in}\;L^2(H,\nu).
\end{array}\right.
$$
We claim that  $f=0$.
 
Take  $\psi\in C^1_b(H)$,
then replacing in \eqref{e14} $\varphi$ by
 $\psi\varphi_n$, yields
 $$
\begin{array}{l}
\ds\left|\int_H   [\psi(x)((-A)^{-1}D\varphi_n(x) \cdot h)+\varphi_n(x)((-A)^{-1}D\psi(x)\cdot h)]\,\nu(dx)\right|\\
\\
\hspace{10mm}\ds \le   \|\varphi_n\psi\|_{L^2(H,\nu)}\,|h|_H
\le  \|\psi\|_\infty\;  \|\varphi_n\|_{L^2(H,\nu)}\,|h|_H.
\end{array}
$$
Letting  $n\to\infty$, we have
 $$
 \int_H   \psi(x) f(x)\,\nu(dx)=0,
$$
which yields  $f=0$ by the arbitrariness of  $\psi$, thereby proving the claim. \medskip

 {\it  Step 2}.  Conclusion.\medskip

    Let  $(\varphi_n)\subset   C^1_b(H)$ and  $F\in C_b(H;H)$ such that
 $$
\left\{\begin{array}{l}
\ds\varphi_n\to 0\;\;\mbox{\rm in}\;L^2(H,\nu),\\
\\
\ds(-A)^{-1} D\varphi_n \to F\;\;\mbox{\rm in}\;L^2(H,\nu;H).
\end{array}\right.
$$
We claim that $F=0$.\medskip

Let  $(e_k)$ be an orthonormal basis on $H$  consisting of eigenvectors of $A$ and let \ $(\alpha_k)$ such that
 $$
Ae_k=-\alpha_ke_k,\quad k\in\N.
$$
Then for any  $k\in\N$ we have  $$\langle (-A)^{-1} D_{k}\varphi_n(x),e_k\rangle\to \alpha_k^{-1}\langle F(x),e_k\rangle\quad\mbox{\rm  in }\;L^2(H,\nu).$$ 

By Step  1 taking  $h=e_k$ we see that  $D_k=D_{e_k}
$ is a closable operator on  $L^2(H,\nu)$ for any  $k\in \N$. 

 So, 
  $$\langle F(\cdot),e_k\rangle=0,\quad\forall\;k\in\N$$ which yields  $F=0$ as required.
 
 \end{proof}

 \subsection{The Sobolev space and the  integration by parts formula}
Let us   denote by   $W^{1,2}_A(H,\nu)$ the domain of the closure of   $(-A)^{-1} D$.   Denoting by $M^*$   the adjoint of   $(-A)^{-1} D$ we have
\begin{equation}
\label{e13}
 \int_H ((-A)^{-1} D\varphi(x)\cdot F(x))\,\nu(dx) =\int_H \varphi(x)\,M^*(F)(x)\,\nu(dx).
\end{equation}
Let  now  $h\in H$, set   $F^h(x)=h,\;\forall\,x\in H$.
By Theorem \ref{t2} we obtain
 $$
\int _H ((-A)^{-1}D\varphi(x)\cdot F^h(x)) \, \nu(dx) \le C\|\varphi\|_{L^2(H,\nu)} \,|Ah|_H, $$
so that  $F^h$ belongs  to the domain of  $M^*$. 
  
   Setting  $M^*(F^h)=v^h$,
we obtain the following integration by part formula.
 \begin{Proposition}
 \label{p4}
For any  $h\in H$ and  any  $\varphi\in W^{1,2}_A(H,\nu)$ there exists a function  $v^h\in L^2(H,\nu)$ such that
\begin{equation}
\label{e14b}
\int_H \langle (-A)^{-1} D\varphi(x),h\rangle\,\nu(dx)=\int_H \varphi(x)\,v^h(x)\,\nu(dx).
\end{equation}

\end{Proposition}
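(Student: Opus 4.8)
The plan is to obtain \eqref{e14b} as a direct instance of the adjoint relation \eqref{e13}, the only substantive ingredient being the a priori bound of Theorem \ref{t2}. Fix $h\in H$ and consider the constant map $F^h(x)=h$. Since $\nu$ is a probability measure and $F^h$ is bounded, $F^h\in L^2(H,\nu;H)$, so it only remains to check that $F^h$ lies in the domain of the adjoint $M^*$ of the closure of $(-A)^{-1}D$; granting this, the formula will follow by simply reading off \eqref{e13}.

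By the definition of the adjoint, $F^h\in D(M^*)$ precisely when the linear functional
$$
\varphi\mapsto \int_H \langle (-A)^{-1}D\varphi(x),h\rangle\,\nu(dx)
$$
is continuous on the core $C^1_b(H)$ for the $L^2(H,\nu)$--norm of $\varphi$. To verify this I would use that $(-A)^{-1}$ is self--adjoint to rewrite the integrand as $\langle D\varphi(x),(-A)^{-1}h\rangle$, and then apply Theorem \ref{t2} with $(-A)^{-1}h$ in place of $h$. This substitution is legitimate because $(-A)^{-1}$ maps $H$ into $D(A)$, and since $A(-A)^{-1}=-I$ the right--hand side of \eqref{e14} collapses to $C\|\varphi\|_{L^2(H,\nu)}\,|h|_H$. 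Replacing $\varphi$ by $-\varphi$ yields the same bound for the absolute value, so the functional is indeed $L^2(H,\nu)$--bounded and $F^h\in D(M^*)$.

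Having secured $F^h\in D(M^*)$, I would set $v^h:=M^*(F^h)\in L^2(H,\nu)$ and invoke \eqref{e13} with $F=F^h$: the left--hand side is exactly $\int_H\langle (-A)^{-1}D\varphi(x),h\rangle\,\nu(dx)$ and the right--hand side is $\int_H\varphi(x)\,v^h(x)\,\nu(dx)$, which is the claimed identity. Since \eqref{e13} holds for every $\varphi$ in the domain of the closure, the formula extends automatically from the core $C^1_b(H)$ to the whole space $W^{1,2}_A(H,\nu)$, as required.

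I do not expect any genuine obstacle at this stage: all the analytic difficulty --- the Bismut--Elworthy--Li representation \eqref{e4.10}, the dissipativity estimates of Lemma \ref{l7b}, and the passage $\alpha\to 0$ --- has already been absorbed into Theorem \ref{t2}. The present statement is a clean functional--analytic consequence, and the only point demanding care is the correct bookkeeping of $(-A)^{-1}$ when transferring the estimate of Theorem \ref{t2} to the functional $\varphi\mapsto\langle (-A)^{-1}D\varphi,h\rangle$, where the self--adjointness of $(-A)^{-1}$ and the identity $A(-A)^{-1}=-I$ are what make the bound come out in terms of $|h|_H$ rather than $|Ah|_H$.
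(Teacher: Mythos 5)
Your proposal is correct and follows essentially the same route as the paper: take the constant field $F^h(x)=h$, show $F^h\in D(M^*)$ by means of Theorem \ref{t2}, set $v^h=M^*(F^h)$, and read off the adjoint identity \eqref{e13}. Your bookkeeping is in fact slightly more careful than the paper's: rewriting $\langle(-A)^{-1}D\varphi(x),h\rangle$ as $\langle D\varphi(x),(-A)^{-1}h\rangle$ and using $A(-A)^{-1}=-I$ gives the bound $C\|\varphi\|_{L^2(H,\nu)}\,|h|_H$ valid for \emph{every} $h\in H$ (the paper's corresponding display shows $|Ah|_H$, a slip, which taken literally would only cover $h\in D(A)$), and your observation that \eqref{e14} must be applied to $\pm\varphi$ correctly accounts for the estimate being stated without an absolute value.
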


Therefore if   $h\in H$     there exists  the   Fomin derivative of   $\nu$ in the direction of   $A^{-1} h$. 
\begin{Remark}
\label{r5}
\em
Assume that $p=0$. Then $\mu=N_Q$,  where   $Q=-\frac12\;A^{-1}$.  Setting $v^h(x)=\sqrt 2\langle Q^{-1/2}x,h\rangle$  \eqref{e13}  reduces to the usual integration by parts formula for the Gaussian measure   $\mu$.  
 Notice, however,   that in this case we can take  $h\in D((-A)^{1/2})$
 
   \end{Remark}

   \appendix

   \section{Convergence of $\nu_\alpha$ to $\nu$}

     \begin{Proposition}
\label{pA1}
For all $\varphi\in C_b(H)$ we have
\begin{equation}
\label{eA1}
\lim_{\alpha\to 0}\int_H\varphi\,d\nu_\alpha=\int_H\varphi\,d\nu.
\end{equation}

\end{Proposition}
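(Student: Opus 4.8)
The plan is to prove tightness of the family $\{\nu_\alpha\}_{\alpha>0}$ in $H$, to extract a weakly convergent subsequence, and to identify every limit point with $\nu$ via the uniqueness assertion of Proposition~\ref{p4.16}. Since every subsequence will then admit a further subsequence converging to $\nu$, the whole family converges weakly to $\nu$ and \eqref{eA1} follows, $C_b(H)$ being a determining class for weak convergence.

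For tightness I would establish a uniform bound in a space compactly embedded in $H$. Fix $\theta\in(0,1)$ with $2\theta<\gamma+1-\tfrac n2$, which is possible \emph{precisely} because of Hypothesis~\ref{h1}, and recall that $D((-A)^\theta)$ embeds compactly into $H$. By the invariance of $\nu_\alpha$, applied to the positive function $x\mapsto|(-A)^\theta x|^2$, one has
$$
\int_H|(-A)^\theta x|^2\,\nu_\alpha(dx)=\int_H\E\,|(-A)^\theta X_\alpha(1,x)|^2\,\nu_\alpha(dx),
$$
and I would bound the right-hand side through the mild formula \eqref{e4.8} for $X_\alpha$ at $t=1$. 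The term $e^{A}x$ is controlled by $\|(-A)^\theta e^{A}\|_{L(H)}^2\int_H|x|^2\,\nu_\alpha(dx)$, uniformly bounded thanks to \eqref{e7c}; the stochastic convolution contributes $\int_0^1\mathrm{Tr}\,[(-A)^{2\theta-\gamma}e^{2sA}]\,ds$, which is finite and independent of $\alpha$ exactly under $2\theta<\gamma+1-\tfrac n2$; and the drift $\int_0^1 e^{(1-s)A}p_\alpha(X_\alpha(s,x))\,ds$ is estimated by Minkowski's inequality together with the smoothing bound $\|(-A)^\theta e^{(1-s)A}\|_{L(H)}\le C(1-s)^{-\theta}$, the pointwise growth $|p_\alpha(r)|\le|p(r)|\le C(1+|r|^N)$ of the Yosida approximations, and the invariance of the $L^{2N}(\mathcal O)$-moment along the flow, which by \eqref{e7c} is uniformly bounded. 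This yields $\sup_\alpha\int_H|(-A)^\theta x|^2\,\nu_\alpha(dx)<\infty$, whence tightness of $\{\nu_\alpha\}$ by Prokhorov's theorem.

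To identify the limit I would pass to the limit in the stationarity identity $\int_H P_t^\alpha\varphi\,d\nu_\alpha=\int_H\varphi\,d\nu_\alpha$, $\varphi\in C_b(H)$. The crucial ingredient is the pathwise contraction $|X_\alpha(t,x)-X_\alpha(t,y)|\le|x-y|$, uniform in $\alpha$ and $t$: the additive noise cancels in the difference and, since $p_\alpha$ is decreasing and $A$ dissipative, $\tfrac12\frac{d}{dt}|X_\alpha(t,x)-X_\alpha(t,y)|^2\le 0$; by Proposition~\ref{p4.8} the same bound passes to the limit equation. Combined with Proposition~\ref{p4.8}, this gives $X_{\alpha_n}(t,x_n)\to X(t,x)$ in probability whenever $x_n\to x$ and $\alpha_n\to0$, hence the continuous convergence $P_t^{\alpha_n}\varphi(x_n)\to P_t\varphi(x)$ and, in particular, $P_t\varphi\in C_b(H)$. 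Given a subsequence with $\nu_{\alpha_n}\rightharpoonup\mu$, I would use the Skorokhod representation to realise $\xi_n\sim\nu_{\alpha_n}$, $\xi_n\to\xi\sim\mu$ almost surely; bounded convergence then gives $\int_H P_t^{\alpha_n}\varphi\,d\nu_{\alpha_n}=\E\,P_t^{\alpha_n}\varphi(\xi_n)\to\E\,P_t\varphi(\xi)=\int_H P_t\varphi\,d\mu$, while $\int_H\varphi\,d\nu_{\alpha_n}\to\int_H\varphi\,d\mu$ by weak convergence. Therefore $\mu$ is invariant for $P_t$, so $\mu=\nu$ by Proposition~\ref{p4.16}.

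I expect the tightness estimate of the second paragraph to be the main obstacle, since it is there that Hypothesis~\ref{h1} enters in an essential way: the admissible exponent $\theta$ is strictly positive only because $\gamma>\tfrac n2-1$, and the compatibility of the stochastic-convolution regularity with the compact embedding $D((-A)^\theta)\hookrightarrow H$ is what ultimately forces $n<4$. The passage to the limit, by contrast, is rendered routine by the pathwise contraction, which simultaneously supplies the Feller property and the uniform-in-$x$ control needed to handle the varying measures $\nu_{\alpha_n}$.
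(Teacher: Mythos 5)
Your proof is correct, but it takes a genuinely different route from the paper's. The paper argues quantitatively through the three-term decomposition \eqref{eA2}: the term $\left|P^\alpha_t\varphi(x)-\int_H\varphi\,d\nu_\alpha\right|$ is estimated, uniformly in $\alpha$, by $Ct^{-\beta}\|\varphi\|_\infty\bigl(|x|+\int_H|y|\,\nu_\alpha(dy)\bigr)$ using the Bismut--Elworthy--Li formula \eqref{e4.10} together with a uniform first-moment bound on $\nu_\alpha$ (the Claim \eqref{eA4}); the term $\left|\int_H\varphi\,d\nu-P_t\varphi(x)\right|$ is handled by the ergodicity of $P_t$ from \cite[Theorem 4.16]{Da04}; and only then is $\alpha\to0$ taken at fixed large $t$. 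You replace all of this by a compactness argument: tightness via a uniform $D((-A)^\theta)$-moment bound, Prokhorov and Skorokhod, identification of every limit point as an invariant measure of $P_t$ via the pathwise contraction (which is exactly the dissipativity computation behind \eqref{e9b}) combined with Proposition \ref{p4.8}, and finally uniqueness from Proposition \ref{p4.16}. Your key computations check out: with Weyl asymptotics $\lambda_k\asymp k^{2/n}$ for the Dirichlet Laplacian, the integrated trace is finite precisely when $2\theta<\gamma+1-\tfrac n2$, and a positive such $\theta$ exists exactly because $\gamma>\tfrac n2-1$; the contraction argument and the continuous-convergence step are sound. As for what each approach buys: yours never invokes the gradient formula \eqref{e4.10}, hence in this proposition uses only $\gamma>\tfrac n2-1$ and not $\gamma<1$, and it avoids the ergodic-convergence input $P_t\varphi(x)\to\int_H\varphi\,d\nu$; the paper's method needs both of these but in exchange sidesteps compactness machinery and yields a uniform-in-$\alpha$ mixing rate as a by-product. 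Two caveats on your write-up. First, your tightness step leans on \eqref{e7c}, which the paper asserts but does not prove in detail (its appendix establishes only the weaker bound \eqref{eA4}); this is not circular, since \eqref{e7c} follows from uniform-in-$\alpha$ energy estimates and invariance, independently of any convergence of $\nu_\alpha$, but you should state this explicitly rather than take \eqref{e7c} for granted. Second, your closing remark is slightly off: the restriction $n<4$ is forced by $\gamma<1$ in Hypothesis \ref{h1} (needed for \eqref{e4.10} and the strong Feller property), not by the compatibility of the stochastic-convolution regularity with the compact embedding --- your tightness argument works in any dimension provided $\gamma>\tfrac n2-1$.
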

\begin{proof}
Let $\varphi\in C_b(H)$. Fix  $\alpha\in (0,1]$, $x\in H$ and write
\begin{equation}
\label{eA2}
\begin{array}{l}
\ds \left|\int_H\varphi\,d\nu- \int_H\varphi\,d\nu_\alpha   \right|\le \left|\int_H\varphi\,d\nu-P_t\varphi(x)   \right|\\
\\
\ds+\left| P_t\varphi(x)-P^\alpha_t\varphi(x)   \right|+\left|P^\alpha_t\varphi(x)- \int_H\varphi\,d\nu_\alpha   \right|.
\end{array} 
\end{equation}
Now choosing $\beta$  such that $\tfrac1n<\gamma<2\beta<1$, and taking into account \eqref{e13d}, we have
\begin{equation}
\label{e13dd}
\int_0^T |(-A)^{\gamma/2}\eta_\alpha ^{h}(t,x)|^2dt\le  KC_{T,\beta} T^{1-2\beta}|h|^2,\quad x\in H,
\end{equation}
where $K$ is defined in \eqref{e21f}.
It follows by Bismut-Elworthy formula that for a suitable constant $C>0$  we have
\begin{equation}
\label{eA3}
\begin{array}{l}
\ds \left|P^\alpha_t\varphi(x)- \int_H\varphi\,d\nu_\alpha   \right|=\left|\int_H[P^\alpha_t\varphi(x)- P^\alpha_t\varphi(y)]\,\nu_\alpha(dy)   \right|\\
\\
\ds\le C t^{-\beta}\|\varphi\|_\infty\int_H|x-y|\,\nu_\alpha(dy)
\le C t^{-\beta}\|\varphi\|_\infty\left((|x|+\int_H|y|\,\nu_\alpha(dy)\right).
\end{array} 
\end{equation}\medskip

{\it Claim}. There exists $M>0$ such that
\begin{equation}
\label{eA4}
\int_H|y|\,\nu_\alpha(dy)\le M,\quad \forall\;\alpha\in(0,1].
\end{equation}
Once the claim is proved the conclusion follows easily from \eqref{eA2} and \eqref{eA3} and \cite[Theorem 4.16]{Da04}. \footnote{Since $p_\alpha$ is dissipative the proof that
$\lim_{\alpha\to 0}P^\alpha_t\varphi(x)=\int_H\varphi\,d\nu_\alpha$ is exactly the same as that in  \cite[Theorem 4.16]{Da04}. }\medskip

To prove the claim it is enough to show 
\begin{equation}
\label{eA5}
\E|X_\alpha(t,x)| \le M,\quad \forall\;\alpha\in(0,1].
\end{equation}
This can be proved as the estimate (4.13) in \cite{Da04} taking into account that
for any $m\in\N$ there is $K_m>0$ such that
\begin{equation}
\label{e6.36}
\E\int_\mathcal O|W_A(t,\xi)|^{2m}d\xi\le K_m,\quad\forall\;t\ge 0.
\end{equation}
Here is finally the proof of \eqref{e6.36}.
We start from the identity
$$
W_A(t,\xi)= \sum_{k\in\N^n}\int_{0}^{t}e^{-\pi^2|k|^2(t-s)} (\pi^2|k|^2)^{-\gamma/2}e_k(\xi)dW_k(s),$$
where 
$$e_k(\xi)=(2\pi)^{n/2}\sin(k_1\xi_1)\cdots\sin(k_n\xi_n),\quad k=(k_1,...,k_n).
$$
Therefore for each $(t,\xi)\in [0,+\infty)\times\mathcal O$, $W_A(t,\xi)$ is a real Gaussian variable with mean $0$ and covariance $q(t,\xi)$ given by
\begin{equation}
\label{e6.37}
q(t,\xi)=\sum_{k\in\N^n}\int_{0}^{t}e^{-2\pi^2|k|^2(t-s)} (\pi^2|k|^2)^{-\gamma}|e_k(\xi)|^2ds.
\end{equation}
Since $|e_k(\xi)|^2\le (2/\pi)^n$ and $\gamma>\frac{n}2 -1$, we find
\begin{equation}
\label{e6.38}
q(t,\xi)\le C(n,\gamma)\sum_{k\in\N^n}\frac1{|k|^{2+2\gamma}}=C_1(n,\gamma)<\infty,\quad\forall\;t\ge 0.
\end{equation}
Therefore
\begin{equation}
\label{e6.39}
\E (W_A(t,\xi))^{2m} \le C_2(n,\gamma,m),\quad\forall\;t\ge 0.
\end{equation}
Finally, integrating in $\xi$ over $\mathcal O$, yields
$$
\E\int_\mathcal O(W_A(t,\xi))^{2m}d\xi\le C_2(n,\gamma,m)\;\mbox{\rm meas.}\;(\mathcal O),\quad\forall\;t\ge 0,
$$
and  the conclusion follows. 
\begin{Remark}
We have used $p'\le 0$ in \eqref{eA3}. If we assume only that $p'$ is bounded above, the differential 
of the transition semigroup may grow in time. However, using classical arguments (see for instance
\cite{De13}), convergence to the invariant measure can be proved under this more general assumption.
\end{Remark}

\end{proof}

   \newpage

\end{document}